\theoremstyle{plain}
\newtheorem{lema}{Lemma}[section]
\newtheorem{prop}[lema]{Proposition}
\newtheorem{teo}[lema]{Theorem}
\newtheorem{coro}[lema]{Corollary}
\theoremstyle{remark}
\theoremstyle{definition}
\newcommand{\kp}{\mathcal{K}}
\newcommand{\x}{\mathcal{X}}
\begin{document}

\title[On Quillen's Theorem A for posets]{On Quillen's Theorem A for posets}

\author[J.A. Barmak]{Jonathan Ariel Barmak $^{\dagger}$}

\thanks{$^{\dagger}$ Supported by grant KAW 2005.0098 from the Knut 
and Alice Wallenberg Foundation.}

\address{Mathematics Department\\
Kungliga Tekniska h\"ogskolan\\
 Stockholm, Sweden}

\email{jbarmak@kth.se}

\begin{abstract}
A theorem of McCord of 1966 and Quillen's Theorem A of 1973 provide sufficient conditions for a map between two posets to be a homotopy equivalence at the level of complexes. We give an alternative elementary proof of this result and we deduce also a stronger statement: under the hypotheses of the theorem, the map is not only a homotopy equivalence but a simple homotopy equivalence. This leads then to stronger formulations of the simplicial version of Quillen's Theorem A, the Nerve lemma and other known results. 
\end{abstract}

\subjclass[2000]{55U10, 06A07, 55P10, 57Q10, 18B35.}

\keywords{Fiber lemma, posets, simplicial complexes, homotopy equivalences, simple homotopy equivalences}

\maketitle

\section{Introduction}

In his seminal paper \cite{Mcc} M.C. McCord gives a condition for a map between two topological spaces to be a weak homotopy equivalence (a map which induces isomorphisms in all the homotopy groups). Roughly speaking, his theorem (\cite[Theorem 6]{Mcc}) says that if a map is locally a weak homotopy equivalence, then so it is globally. This result allows him to establish the relationship between the homotopy theory of finite topological spaces and finite complexes.

Given a finite poset $X$, the \textit{associated complex} (also called \textit{order complex}) $\kp (X)$ is the simplicial complex whose simplices are the non-empty chains of $X$. An order preserving map $f: X\to Y$ between finite posets induces a simplicial map $\kp (f): \kp (X)\to \kp (Y)$ which coincides with $f$ on vertices. A finite poset $X$ can be considered as a finite topological space and it can be proved from McCord's Theorem that there is a weak homotopy equivalence $\kp (X) \to X$.

The celebrated Theorem A of Quillen \cite{Qui2} establishes a condition for which a functor between two categories induces a homotopy equivalence between the classifying spaces.

Although these powerful and general results apply in very different contexts, they have a particular common application, which is without discussion one of the most useful known tools to study the relation between posets and homotopy theory. The McCord-Quillen Theorem \ref{ppal}, many times referred to as ``Quillen's Fiber lemma", is on one hand McCord's Theorem applied to finite spaces and the covers given by the minimal bases, and on the other hand Quillen's Theorem A applied to finite posets.   

\begin{teo}[McCord '66, Quillen '73] \label{ppal}
Let $f:X\to Y$ be an order preserving map between two finite posets. Suppose that for every $y\in Y$, the complex $\kp(f^{-1}(U_y))$ is contractible. Then $\kp(f)$ is a homotopy equivalence.  
\end{teo}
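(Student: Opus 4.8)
The plan is to argue by induction on $|Y|$, the number of elements of the target poset, and to reduce the inductive step to a gluing of homotopy equivalences over smaller posets by means of the standard gluing lemma for pushouts along cofibrations. If $|Y|=1$, say $Y=\{y\}$, then $f^{-1}(U_y)=X$, so $\kp(X)$ is contractible by hypothesis and $\kp(f)\colon \kp(X)\to \kp(Y)=\ast$ is a homotopy equivalence. For the inductive step, fix a maximal element $y_0$ of $Y$ and set $Y'=Y\setminus\{y_0\}$, $X'=f^{-1}(Y')$, $f'=f|_{X'}$, and $\hat U_{y_0}=U_{y_0}\setminus\{y_0\}=\{y\in Y : y<y_0\}$.

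The first step is to produce two compatible pushout decompositions. Because $y_0$ is maximal, a chain $x_0<\dots<x_m$ in $X$ containing some $x_j$ with $f(x_j)=y_0$ automatically has $f(x_m)=y_0$ (from $f(x_j)\le f(x_m)$ and maximality), hence lies entirely in $f^{-1}(U_{y_0})$; otherwise it lies in $X'$. Thus $\kp(X)=\kp(X')\cup\kp(f^{-1}(U_{y_0}))$, with intersection $\kp(X')\cap\kp(f^{-1}(U_{y_0}))=\kp(f^{-1}(\hat U_{y_0}))$. Applying the same computation with $f=\mathrm{id}_Y$ gives $\kp(Y)=\kp(Y')\cup\kp(U_{y_0})$ with intersection $\kp(\hat U_{y_0})$. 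Since $f(X')\subseteq Y'$, $f(f^{-1}(U_{y_0}))\subseteq U_{y_0}$ and $f(f^{-1}(\hat U_{y_0}))\subseteq\hat U_{y_0}$, the map $\kp(f)$ is precisely the morphism between these two pushout squares of subcomplex inclusions.

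The second step is to check that the three ``corner'' maps are homotopy equivalences. For $f'\colon X'\to Y'$: maximality of $y_0$ gives $y_0\notin U_y$ for $y\neq y_0$, so $U_y^{Y'}=U_y^{Y}$ and $(f')^{-1}(U_y^{Y'})=f^{-1}(U_y)$, which is contractible; as $|Y'|<|Y|$, the inductive hypothesis applies and $\kp(f')$ is a homotopy equivalence. For $f|_{f^{-1}(U_{y_0})}\colon f^{-1}(U_{y_0})\to U_{y_0}$: the domain is contractible by hypothesis and the codomain is contractible because $U_{y_0}$ has a maximum, so this map is a homotopy equivalence. For the ``link'' map $f^{-1}(\hat U_{y_0})\to\hat U_{y_0}$: when $y<y_0$ one has $U_y\subseteq\hat U_{y_0}$, hence $U_y^{\hat U_{y_0}}=U_y$ and the fiber over $U_y^{\hat U_{y_0}}$ is again $f^{-1}(U_y)$, contractible; since $|\hat U_{y_0}|<|Y|$, the inductive hypothesis applies once more and this map is a homotopy equivalence.

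With all three corner maps homotopy equivalences and all four span legs inclusions of subcomplexes (hence cofibrations), the gluing lemma yields that the induced map on pushouts, which is $\kp(f)\colon\kp(X)\to\kp(Y)$, is a homotopy equivalence; the degenerate cases $f^{-1}(y_0)=\emptyset$ or $\hat U_{y_0}=\emptyset$ are covered by the same argument, the relevant pushouts then collapsing to coproducts. The step requiring the most care is the first one: one must check that maximality of $y_0$ genuinely forces $\kp(X)=\kp(X')\cup\kp(f^{-1}(U_{y_0}))$ and $\kp(Y)=\kp(Y')\cup\kp(U_{y_0})$ to be honest covers with exactly the claimed intersections, so that the union squares are genuine pushouts and the gluing lemma applies; once the pushout picture is set up, the verification that the hypotheses descend to $f'$ and to the link map is routine. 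An alternative but less self-contained route would invoke McCord's theorem on basis-like open covers, applied to the cover $\{U_y\}_{y\in Y}$ of the finite space $Y$, together with Whitehead's theorem, but the inductive argument above sidesteps the machinery of weak homotopy equivalences of finite spaces altogether.
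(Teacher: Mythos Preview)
Your proof is correct and takes a genuinely different route from the paper's. The paper does not induct on $|Y|$ at all; instead it introduces the \emph{non-Hausdorff mapping cylinder} $B(f)$, the poset on $X\sqcup Y$ in which $x\le y$ whenever $f(x)\le y$. It then removes the points of $X$ one by one (in a linear extension) to show that $\kp(Y)\hookrightarrow\kp(B(f))$ is a homotopy equivalence, and removes the points of $Y$ one by one to show that $\kp(X)\hookrightarrow\kp(B(f))$ is a homotopy equivalence; each single-point removal is handled by the lemma that $\kp(Z\smallsetminus\{z\})\hookrightarrow\kp(Z)$ is a homotopy equivalence when $\kp(\hat U_z)$ or $\kp(\hat F_z)$ is contractible. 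Finally, $i\le jf$ in $B(f)$ gives $\kp(i)\simeq\kp(j)\kp(f)$, so $\kp(f)$ is a homotopy equivalence.

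Your inductive pushout argument is arguably more familiar and completely self-contained; the verification that the hypotheses restrict correctly to $f'\colon X'\to Y'$ and to the link map is clean, and the gluing lemma does the rest. The paper's mapping-cylinder approach, on the other hand, is tailored to the paper's real goal: because each step is a single-vertex deletion with contractible link, each inclusion $\kp(Y_{r-1})\hookrightarrow\kp(Y_r)$ and $\kp(X_r)\hookrightarrow\kp(X_{r-1})$ is immediately a \emph{simple} homotopy equivalence (via Cohen's Theorem~(20.1)), and the $2$-out-of-$3$ property then promotes $\kp(f)$ itself to a simple homotopy equivalence. Your argument could presumably be upgraded the same way, but it would require a simple-homotopy version of the gluing lemma rather than the one-vertex-at-a-time statement the paper uses.
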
    
Here, $U_y\subseteq Y$ denotes the subset of elements which are smaller than or equal to $y$. Quillen's statement is implicit in \cite{Qui2} and explicit in \cite[Proposition 1.6]{Qui}. Theorem \ref{ppal} has shown to be indispensable in the study of the topology of order complexes of posets. Some important consequences are for example the simplicial version of Theorem A, the so called Nerve lema and Dowker's Theorem on complexes associated to a relation.


Both McCord's Theorem and Quillen's Theorem A have technical nontrivial proofs. In \cite{Wal} Walker gives an elementary proof of Theorem \ref{ppal} using a homotopy version of the Acyclic carrier theorem. In this article we give a different proof of Theorem \ref{ppal}. Our proof is also very basic but the most important consequence is that it can be easily improved to obtain a stronger statement of the theorem.

Whitehead's simple homotopy theory aimed to give a combinatorial description of homotopy types of simplicial complexes. The concepts of simplicial collapse and expansion give rise to the notions of simple homotopy types and simple homotopy equivalences. Simple homotopy equivalent complexes are homotopy equivalent and simple homotopy equivalences are homotopy equivalences, but these implications are strict. CW-complexes were created by Whitehead while he was studying the difference between those concepts. This theory is also of great importance by its applications to combinatorial group theory, differential topology and piecewise-linear topology.

We prove that under the same hypotheses as Theorem \ref{ppal}, the simplicial map $\kp (f)$ is not only a homotopy equivalence but a simple homotopy equivalence.
  
\begin{teo} \label{ppalprima}
Let $f:X\to Y$ be an order preserving map between two finite posets. Suppose that for every $y\in Y$, the complex $\kp(f^{-1}(U_y))$ is contractible. Then $\kp(f)$ is a simple homotopy equivalence.  
\end{teo}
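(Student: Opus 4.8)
The plan is to strengthen the proof of Theorem \ref{ppal} so that every homotopy equivalence that appears along the way is recognized as a simple homotopy equivalence. The basic strategy for Theorem \ref{ppal} is to factor $\kp(f)$ through an auxiliary poset: one forms the non-Hausdorff mapping cylinder (or the order-theoretic mapping cylinder) $Z$ of $f$, namely the poset $X \sqcup Y$ with the order of $X$ and of $Y$ retained and with $x < y$ whenever $f(x) \le y$ in $Y$. There are inclusions $i\colon X \hookrightarrow Z$ and $j\colon Y \hookrightarrow Z$, and $Y$ is a retract of $Z$ via the retraction $r\colon Z \to Y$ extending $f$ and $\mathrm{id}_Y$; moreover $\kp(j)$ is a strong deformation retract onto $\kp(Y)$, and this deformation retraction is in fact an elementary collapse-type move at the complex level, hence a simple homotopy equivalence. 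Since $r i = f$, it suffices to prove that $\kp(i)\colon \kp(X) \to \kp(Z)$ is a simple homotopy equivalence; the hypothesis on the fibers $f^{-1}(U_y)$ enters precisely here.

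The key step is therefore to show that $\kp(X)$ is obtained from $\kp(Z)$ by a sequence of collapses (equivalently, $\kp(Z) \searrow \kp(X)$ after possibly some expansions), using that for each $y \in Y$ the subcomplex $\kp(f^{-1}(U_y))$ is contractible. I would remove the elements of $Y$ from $Z$ one at a time, in a linear extension of the order of $Y$ that removes maximal elements first (so that when $y$ is removed, the remaining elements of $Y$ below $y$ are already gone, and $U_y \cap Z$ restricted to the current poset is exactly $\{y\}$ together with $f^{-1}(U_y)$). Removing such a $y$ from the current poset $Z'$ changes $\kp(Z')$ by deleting the open star of $y$; what is deleted is the cone $y * \mathrm{lk}_{\kp(Z')}(y)$, and the link of $y$ is exactly $\kp\bigl((f^{-1}(U_y)) \cup (\text{elements of } Y \text{ strictly above } y)\bigr)$, which deformation retracts onto — indeed collapses onto, using contractibility of $\kp(f^{-1}(U_y))$ — a suitable contractible subcomplex. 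Since a cone on a collapsible complex is collapsible, each such elementary step is a collapse (up to the standard fact that collapsibility of the link yields collapsibility of the deleted cone onto the remaining part), and composing these gives $\kp(Z) \searrow \kp(X)$.

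A cleaner way to package the inductive step, which I would actually use, is the following: I expect contractibility of $\kp(f^{-1}(U_y))$ to give, via a Whitehead-type lemma, that $\kp(f^{-1}(U_y))$ is \emph{collapsible} is \emph{not} automatic — contractible does not imply collapsible in general. So instead I would phrase the argument using the relative simple homotopy type: adding the new vertex $y$ to the poset corresponds at the complex level to gluing a cone $\overline{y} * L$ along $L = \mathrm{lk}(y)$, and the inclusion of $L$ into $\overline{y} * L$ is a simple homotopy equivalence whenever $L$ is contractible (this is a standard fact: coning off is a simple homotopy equivalence precisely because $\overline{y} * L$ collapses to $\overline{y}$, and separately $L \hookrightarrow \overline{y}*L$ is a homotopy equivalence; one checks the torsion vanishes because $L$ is contractible, or more elementarily one notes $CL \searrow \mathrm{pt}$ when we are allowed to also expand). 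Thus each step $\kp(Z') \hookrightarrow \kp(Z' \cup \{y\})$ is a simple homotopy equivalence, and the composite $\kp(X) \hookrightarrow \kp(Z)$ is too.

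The main obstacle is exactly this gap between \emph{contractible} and \emph{collapsible}: one cannot hope to produce an honest collapse $\kp(Z) \searrow \kp(X)$ in general, so the argument must be carried out in the Whitehead group, showing that the Whitehead torsion of $\kp(i)$ vanishes. The cleanest route is to isolate a lemma of the form: if $K$ is a finite complex, $L \subseteq K$ is contractible, and $v$ is a new vertex, then $K \hookrightarrow K \cup (v * L)$ is a simple homotopy equivalence. Granting that lemma (which follows from the fact that $v * L$ collapses to $v$, together with the sum theorem for Whitehead torsion and the triviality of the Whitehead group of a point), the theorem follows by the telescoping argument above. I would expect the paper to prove this lemma in an elementary, self-contained way — likely by an explicit expansion/collapse argument on $v * L$ using a mapping-cylinder presentation — and that is where the real work lies.
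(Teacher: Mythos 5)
Your proposal is correct and follows essentially the same route as the paper: factor $\kp(f)$ through the non-Hausdorff mapping cylinder, add or delete one point at a time along a linear extension, and upgrade each one-point inclusion to a simple homotopy equivalence because the relevant Whitehead torsion lives in the trivial Whitehead group (the paper packages exactly this step by citing Cohen's Theorem (20.1), applied to the contractible open star of the deleted point, and then concludes with the $2$-out-of-$3$ property just as you do). One small slip: for the down-set of $y$ in the current poset to be $\{y\}\cup f^{-1}(U_y)$ you must delete the elements of $Y$ starting with the \emph{minimal} ones (which is what your parenthetical actually describes), not the maximal ones.
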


Theorem \ref{ppalprima} originally appears in the author's Thesis \cite[Proposition 6.2.9]{Bar3} formulated in the setting of finite spaces. In Section \ref{seccionppal} we present a self contained proof of the simplicial statement which is more transparent than the one of \cite{Bar3}.   
From this result we immediately obtain stronger formulations of the simplicial version, the Nerve lemma and Dowker's Theorem. 

The key point of our approach is the so called \textit{non-Hausdorff mapping cylinder} of a map between posets introduced by Barmak and Minian in \cite{BM} where it is used to establish the relationship between finite topological spaces (finite posets) and simple homotopy theory of polyhedra.

In the last section we use our ideas to give short proofs of one extension of Theorem \ref{ppal}, studying the case in which the complexes $\kp (f^{-1}(U_y))$ are $n$-connected, and its homological version.

\section{Preliminaries}

The star $st_K(v)$ of a vertex $v$ in a simplicial complex $K$ is the subcomplex of simplices $\sigma \in K$ such that $\sigma \cup \{v\} \in K$. The link $lk_K(v)$ is the subcomplex of $st_K(v)$ of simplices which do not contain $v$. The join of two (disjoint) simplicial complexes $K$ and $L$ is the simplicial complex $K*L$ whose simplices are those of $K$, those of $L$ and unions of a simplex of $K$ with a simplex of $L$. If two complexes are homotopy equivalent, their joins with a third complex are also homotopy equivalent. In particular, the join of a contractible complex with another complex is contractible. For simplicity we will identify a simplicial complex with its geometric realization.

The next basic result follows from the Gluing theorem (see \cite{Bro}).

\begin{prop} \label{push}
Suppose that $K_1$ and $K_2$ are two subcomplexes of a complex $K$, and $K=K_1\cup K_2$. If the inclusion $K_1\cap K_2 \hookrightarrow K_1$ is a homotopy equivalence, then so is $K_2\hookrightarrow K$.  
\end{prop}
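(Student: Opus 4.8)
The plan is to exhibit the inclusion $K_2\hookrightarrow K$ as the map induced on pushouts by a morphism of spans and then invoke the Gluing theorem cited from \cite{Bro}. The starting observation is that, since $K_1$ and $K_2$ are subcomplexes with union $K$ and intersection $K_1\cap K_2$, the complex $K$ is exactly the pushout of the span $K_1\hookleftarrow K_1\cap K_2\hookrightarrow K_2$: every simplex of $K$ lies in $K_1$ or in $K_2$, and it lies in both precisely when it belongs to $K_1\cap K_2$.

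Next I would compare this span with the degenerate one obtained by replacing $K_1$ with $K_1\cap K_2$, through the commutative ladder
\[
\begin{array}{ccccc}
K_1\cap K_2 & \longleftarrow & K_1\cap K_2 & \longrightarrow & K_2\\[2pt]
\downarrow & & \downarrow & & \downarrow\\[2pt]
K_1 & \longleftarrow & K_1\cap K_2 & \longrightarrow & K_2 ,
\end{array}
\]
in which the top-left leg is the identity, every other horizontal arrow is a subcomplex inclusion, the left vertical arrow is the inclusion $i\colon K_1\cap K_2\hookrightarrow K_1$, and the two remaining vertical arrows are identities. The pushout of the top row is $K_2$ (its left leg being an identity), the pushout of the bottom row is $K$, and the map induced between the pushouts is precisely the inclusion $K_2\hookrightarrow K$.

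It then remains to check the hypotheses of the Gluing theorem. Each of the four horizontal arrows is either an identity or the inclusion of a subcomplex, hence a closed cofibration; and the three vertical maps are homotopy equivalences, since the middle and right ones are identities while the left one is $i$, a homotopy equivalence by assumption. The Gluing theorem therefore guarantees that the induced map on pushouts, namely $K_2\hookrightarrow K$, is a homotopy equivalence, as asserted.

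The individual steps are routine---identifying the two pushouts, verifying commutativity of the ladder, and recalling that subcomplex inclusions are cofibrations---so no genuine difficulty arises once the statement is framed this way; the only point deserving care is confirming that the map induced on pushouts really is the inclusion $K_2\hookrightarrow K$. Should one wish to bypass the Gluing theorem entirely, the conclusion follows just as elementarily: $i$ is at once a closed cofibration and a homotopy equivalence, hence a strong deformation retraction of $K_1$ onto $K_1\cap K_2$; gluing the associated homotopy on $K_1$ with the constant homotopy on $K_2$, which agree on the overlap $K_1\cap K_2$, produces a strong deformation retraction of $K$ onto $K_2$, whence $K_2\hookrightarrow K$ is a homotopy equivalence.
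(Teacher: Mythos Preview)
Your argument is correct and is precisely the route the paper indicates: it does not give a proof, merely noting that the result ``follows from the Gluing theorem (see \cite{Bro}),'' which is exactly what you carry out by comparing the two pushout squares. Your optional final remark about the deformation retraction is also fine but unnecessary here.
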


The following result is a particular case of the well known fact that natural transformations induce homotopies in the classifying spaces. We include a simple proof for completeness which appears in the author's Thesis \cite[Proposition 2.1.2]{Bar3} and in \cite{BM2}. 

\begin{prop} \label{lema1}
Let $f,g:X\to Y$ be two order preserving maps between finite posets. Suppose that $f(x)\le g(x)$ for every $x\in X$. Then $\kp (f)$ and $\kp (g)$ are homotopic.
\end{prop}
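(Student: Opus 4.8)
The plan is to realize the pair $(f,g)$ as a single order preserving map out of the product of $X$ with the two-element chain, and then to use that the order complex of such a product is a triangulation of a prism. Write $[1]$ for the poset $\{0<1\}$ and consider the product poset $X\times[1]$ together with its two order preserving sections $i_0,i_1\colon X\to X\times[1]$ defined by $i_k(x)=(x,k)$. First I would define $H\colon X\times[1]\to Y$ by $H(x,0)=f(x)$ and $H(x,1)=g(x)$ and check that $H$ is order preserving: if $(x,\epsilon)\le(x',\epsilon')$ in $X\times[1]$ then $x\le x'$, and when $\epsilon=\epsilon'$ this follows from the monotonicity of $f$ (resp.\ $g$), while when $\epsilon=0<1=\epsilon'$ one has $H(x,0)=f(x)\le f(x')\le g(x')=H(x',1)$, the last inequality being exactly the standing hypothesis applied to $x'$. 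By construction $H\circ i_0=f$ and $H\circ i_1=g$, so $\kp(H)\circ\kp(i_k)=\kp(H\circ i_k)$ equals $\kp(f)$ for $k=0$ and $\kp(g)$ for $k=1$.

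The core of the argument is the standard fact that $|\kp(X\times[1])|$ is a triangulation of the prism $|\kp(X)|\times[0,1]$. I would make this precise by producing a homeomorphism $\varphi\colon|\kp(X)|\times[0,1]\to|\kp(X\times[1])|$ with $\varphi(\cdot,k)=\kp(i_k)$ for $k=0,1$, built simplex by simplex: for each chain $\sigma=\{x_0<\dots<x_n\}$ of $X$, the prism $|\sigma|\times[0,1]$ is subdivided, via the evident affine identifications, into the $n+1$ geometric simplices spanned by the chains $(x_0,0)<\dots<(x_j,0)<(x_j,1)<\dots<(x_n,1)$ of $X\times[1]$, for $0\le j\le n$. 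One checks that every chain of $X\times[1]$ occurs as a face of one of these prism simplices, and that the affine identifications attached to $\sigma$ and to its faces agree on shared faces, so that they glue together to a global homeomorphism $\varphi$; its restrictions to $|\kp(X)|\times\{0\}$ and $|\kp(X)|\times\{1\}$ are by construction $\kp(i_0)$ and $\kp(i_1)$.

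Granting this, the composite $\kp(H)\circ\varphi\colon|\kp(X)|\times[0,1]\to|\kp(Y)|$ is a homotopy whose end maps are $\kp(H)\circ\kp(i_0)=\kp(f)$ and $\kp(H)\circ\kp(i_1)=\kp(g)$, which proves that $\kp(f)\simeq\kp(g)$. The only step that requires genuine care is the prism triangulation of the previous paragraph — the bookkeeping that the $n+1$ simplices attached to each $\sigma$ exhaust $\kp(X\times[1])$ and are mutually compatible under the affine maps — but this is a routine verification in explicit barycentric coordinates, and it serves as the elementary, finite substitute for the general principle that a natural transformation of functors induces a homotopy of classifying spaces.
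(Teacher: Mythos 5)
Your argument is correct, but it is genuinely different from the one in the paper. You prove the statement by assembling $f$ and $g$ into a single order preserving map $H$ on the cylinder poset $X\times[1]$ and then invoking (and sketching a proof of) the fact that $\kp(X\times[1])$ triangulates the prism $|\kp(X)|\times[0,1]$ compatibly with the two ends; the verifications you flag are the right ones, namely that the simplices spanned by $(x_0,0)<\dots<(x_j,0)<(x_j,1)<\dots<(x_n,1)$ triangulate each prism $|\sigma|\times[0,1]$, that every chain of $X\times[1]$ is a face of such a simplex, and that these affine pieces agree on shared faces, and all of these are standard (this is exactly the prism decomposition underlying the proof of homotopy invariance of simplicial homology, or equivalently the homeomorphism $|\kp(P\times Q)|\cong|\kp(P)|\times|\kp(Q)|$ for finite posets). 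The paper instead argues by induction: choosing a maximal $x$ with $f(x)\neq g(x)$ and raising $f(x)$ to an element $y$ covering it with $y\le g(x)$, it produces an intermediate order preserving map $h$ with $\kp(f)$ and $\kp(h)$ contiguous, so that the linear homotopy applies, and concludes by induction. Your route gives the homotopy in one step, is conceptually the specialization to posets of the classifying-space cylinder argument, and generalizes readily, at the cost of the triangulation bookkeeping; the paper's route avoids any product or triangulation argument entirely and shows slightly more combinatorially, namely that $\kp(f)$ and $\kp(g)$ are connected by a chain of contiguous simplicial maps, which is why it can be kept to a few lines in an elementary, self-contained exposition.
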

\begin{proof}
Suppose that $f\neq g$. Let $x\in X$ be a maximal point with the property that $f(x)\neq g(x)$. Let $y\in Y$ be an element covering $f(x)$ and such that $y\le g(x)$. Consider the map $h:X\to Y$ which coincides with $f$ in every point different from $x$ and such that $h(x)=y$. By the maximality of $x$, $h$ is order preserving. The simplicial maps $\kp (f)$ and $\kp (h)$ are contiguous (i.e. $\kp (f)(\sigma)\cup \kp (h)(\sigma)\in \kp (Y)$ for every simplex $\sigma \in \kp (X)$) and in particular the linear homotopy between them is well defined and continuous. By induction $\kp (h)\simeq \kp (g)$ and therefore $\kp (f)\simeq \kp (g)$.  
\end{proof}

Given a finite poset $X$, we will denote $U^X_x=\{x' \in X \ | \ x'\le x \}$, $F^X_x=\{x' \in X \ | \ x'\ge x \}$, $\hat{U}^X_x=\{x' \in X \ | \ x'< x \}$ and $\hat{F}^X_x=\{x' \in X \ | \ x'> x \}$. When there is no risk of confusion we will just write $U_x, F_x, \hat{U}_x$ and $\hat{F}_x$. 

\section{An alternative proof of McCord-Quillen Theorem \ref{ppal}}
The idea of our approach is to prove the theorem in some very particular cases in which the map is just an inclusion of a poset into another poset with only one more point. The general case will follow taking compositions of these basic maps and homotopy inverses.   

The next result follows immediately from Theorem \ref{ppal} as it is observed in \cite[Proposition 6.1]{Wal} (see also \cite{BM}) but here we use a different idea (cf. \cite[Proposition 3.10]{BM4}) since we will need it in the proof of the theorem.

\begin{lema} \label{lema2}
Let $X$ be a finite poset and let $x\in X$ be such that $\kp(\hat{U}_x)$ or $\kp(\hat{F}_x)$ is contractible. Then $\kp(X\smallsetminus \{x\}) \hookrightarrow \kp(X)$ is a homotopy equivalence. 
\end{lema}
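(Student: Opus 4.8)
The plan is to deduce the statement from the gluing result, Proposition \ref{push}, applied to a standard decomposition of $\kp(X)$ into two subcomplexes. I would set $K=\kp(X)$, take $K_1=st_K(x)$ to be the star of the vertex $x$ in $K$, and $K_2=\kp(X\smallsetminus\{x\})$. Since every chain of $X$ either avoids $x$ (hence is a simplex of $K_2$) or contains $x$ (hence lies in $st_K(x)$), we have $K=K_1\cup K_2$, and Proposition \ref{push} reduces the problem to showing that the inclusion $K_1\cap K_2\hookrightarrow K_1$ is a homotopy equivalence.

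Next I would identify the two complexes involved. The star $K_1=st_K(x)$ is the cone $x*lk_K(x)$, so it is contractible. The intersection $K_1\cap K_2$ consists precisely of the chains of $X$ that do not contain $x$ but become chains after adjoining $x$, that is, the chains all of whose elements are comparable with $x$; decomposing each such chain into its part lying below $x$ and its part lying above $x$ identifies $K_1\cap K_2=lk_K(x)=\kp(\hat{U}_x)*\kp(\hat{F}_x)$. By hypothesis one of the two factors is contractible, and the join of a contractible complex with any complex is contractible (recalled in the Preliminaries), so $K_1\cap K_2$ is contractible. As both $K_1$ and $K_1\cap K_2$ are then contractible, the inclusion $K_1\cap K_2\hookrightarrow K_1$ is a homotopy equivalence, and Proposition \ref{push} finishes the proof.

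The argument is essentially formal, so I do not expect a serious obstacle; the only points needing care are the combinatorial identification $lk_K(x)=\kp(\hat{U}_x)*\kp(\hat{F}_x)$ and the degenerate cases where $\hat{U}_x$ or $\hat{F}_x$ is empty. For instance, if $\hat{U}_x=\emptyset$ then $\kp(\hat{U}_x)$ is the empty complex, which is not contractible, so the hypothesis must be that $\kp(\hat{F}_x)$ is contractible, and indeed $lk_K(x)=\emptyset*\kp(\hat{F}_x)=\kp(\hat{F}_x)$ is then contractible; the case $\hat{F}_x=\emptyset$ is symmetric. Thus the whole difficulty reduces to getting the join decomposition of $lk_K(x)$ right, after which the result is immediate from Proposition \ref{push}.
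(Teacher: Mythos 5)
Your proof is correct and follows essentially the same route as the paper: identify $lk_{\kp(X)}(x)=\kp(\hat{U}_x)*\kp(\hat{F}_x)$, conclude it is contractible (hence the inclusion into the contractible star is a homotopy equivalence), and apply Proposition \ref{push} to the decomposition $\kp(X)=st_{\kp(X)}(x)\cup\kp(X\smallsetminus\{x\})$. Your extra care with the degenerate cases $\hat{U}_x=\emptyset$ or $\hat{F}_x=\emptyset$ is a fine addition but does not change the argument.
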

\begin{proof}
By hypothesis, $lk_{\kp (X)}(x)=\kp(\hat{U}_x)* \kp(\hat{F}_x)$ is contractible. Therefore the inclusion $lk_{\kp (X)}(x)=st_{\kp (X)}(x)\cap \kp (X\smallsetminus \{x\}) \hookrightarrow st_{\kp (X)}(x)$ is a homotopy equivalence. The lemma follows then from Proposition \ref{push}.
\end{proof}

\begin{proof}[\textbf{Proof of Theorem \ref{ppal}}]
Assume that $X$ and $Y$ are disjoint. Consider the \textit{non-Hausdorff mapping cylinder} $B(f)$. The underlying set of the poset $B(f)$ is the union $X\cup Y$. The given ordering within $X$ and $Y$ is kept and for $x\in X$, $y\in Y$ one has $x\le y$ in $B(f)$ if $f(x)\le y$ in $Y$. Let $i:X\hookrightarrow B(f)$ and $j:Y\hookrightarrow B(f)$ be the canonical inclusions.

Let $x_1, x_2, \ldots, x_n$ be a linear extension of $X$ (i.e. an ordering of the elements of $X$ such that $x_r\le x_s$ implies $r\le s$) and denote $Y_r=Y\cup \{x_1, x_2, \ldots, x_r\} \subseteq B(f)$ for each $0\le r\le n$. Then $$\hat{F}_{x_r}^{Y_r}=\{y \ | \ y \ge f(x_r)\}=F_{f(x_r)}^Y.$$
Therefore $\kp (\hat{F}_{x_r}^{Y_r})=\kp (F_{f(x_r)}^Y)$ is a cone and in particular, contractible. By Lemma \ref{lema2}, $\kp (Y_{r-1}) \hookrightarrow \kp (Y_{r})$ is a homotopy equivalence and then the inclusion $\kp(j): \kp (Y)=\kp (Y_0) \hookrightarrow \kp (Y_n)=\kp (B(f))$ is also a homotopy equivalence.

Now let $y_1,y_2, \ldots, y_m$ be a linear extension of $Y$ and let $X_r=X \cup \{y_{r+1},y_{r+2}, \ldots, y_m\} \subseteq B(f)$ for every $0\le r\le m$. Then $$\hat{U}_{y_{r}}^{X_{r-1}}=\{x \ | \ f(x)\le y_{r}\}=f^{-1}(U_{y_r}^Y).$$ By hypothesis, $\kp (\hat{U}_{y_{r}}^{X_{r-1}})$ is contractible. By Lemma \ref{lema2}, $\kp (X_r) \hookrightarrow \kp (X_{r-1})$ is a homotopy equivalence and then so is $\kp (i): \kp (X)=\kp (X_m) \hookrightarrow \kp (X_0)=\kp (B(f))$.

Since $i(x)\le jf(x)$ for every $x\in X$, by Proposition \ref{lema1}, $\kp(i)\simeq \kp(jf)=\kp (j) \kp (f)$. Hence, $\kp (f)$ is a homotopy equivalence. 
\end{proof}

\section{A simple stronger statement} \label{seccionppal}

We will show that the proof of Theorem \ref{ppal} can be easily modified to obtain the stronger Theorem \ref{ppalprima}.

If $K$ is a finite simplicial complex with a simplex $\tau$ which is a proper face of a unique simplex $\sigma$, we say that there is an elementary collapse from $K$ to the subcomplex $L\subset K$ which is obtained from $K$ by removing the simplices $\sigma$ and $\tau$. If there is a sequence of elementary collapses from a complex $K$ to a subcomplex $L$, we say that $K$ collapses to $L$. Two complexes have the same simple homotopy type if it is possible to obtain one from the other by performing collapses and their inverses (expansions).  
A class of maps $\mathcal{C}$ between topological spaces is said to satisfy the $2$-out-of-$3$ property if whenever there are three maps $f,g,h$ such that the composition $fg$ is well defined, $fg\simeq h$ and two of the three maps are in $\mathcal{C}$, then so is the third. The class of simple homotopy equivalences is the smallest class satisfying the $2$-out-of-$3$ property and containing all the inclusions $L \hookrightarrow K$ where $K$ is a complex and $L$ is a subcomplex which expands to $K$. For basic properties on simple homotopy theory we encourage the readers to consult \cite{Coh}.

Theorem (20.1) of \cite{Coh} states that if $L$ is a subcomplex of a complex $K$, the inclusion $L\hookrightarrow K$ is a homotopy equivalence and every connected component of the space $K \smallsetminus L$ is simply connected, then $L\hookrightarrow K$ is a simple homotopy equivalence. 

From this result and Lemma \ref{lema2} we obtain a refined statement of Lemma \ref{lema2}. Note that if $X$ is a finite poset and $x\in X$, then the space $\kp (X)\smallsetminus K(X\smallsetminus \{x\})$ is the open star of $x$ in $\kp (X)$ which is contractible. 

\begin{lema} \label{lema2prima}
Let $X$ be a finite poset and let $x\in X$ be such that $\kp(\hat{U}_x)$ or $\kp(\hat{F}_x)$ is contractible. Then $\kp(X\smallsetminus \{x\}) \hookrightarrow \kp(X)$ is a simple homotopy equivalence. 
\end{lema}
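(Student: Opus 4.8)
The plan is to derive this refinement of Lemma \ref{lema2} from Lemma \ref{lema2} itself together with the cited Theorem (20.1) of \cite{Coh}. First I would apply Lemma \ref{lema2}: the hypothesis that one of $\kp(\hat{U}_x)$, $\kp(\hat{F}_x)$ is contractible is precisely what that lemma requires, so the inclusion $\kp(X\smallsetminus\{x\})\hookrightarrow\kp(X)$ is already known to be a homotopy equivalence. It then remains only to verify the hypothesis of Theorem (20.1) concerning the connected components of the complement $\kp(X)\smallsetminus\kp(X\smallsetminus\{x\})$.

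For that, I would identify this complement with the open star of the vertex $x$ in $\kp(X)$: a point of $\kp(X)$ fails to lie in $\kp(X\smallsetminus\{x\})$ exactly when the simplex carrying it in its relative interior contains $x$. This open star is star-shaped with respect to $x$ --- the straight segment from $x$ to any of its points remains inside it --- hence contractible, and in particular connected and simply connected. Thus, taking $K=\kp(X)$ and $L=\kp(X\smallsetminus\{x\})$, the inclusion $L\hookrightarrow K$ is a homotopy equivalence whose complement has a single, simply connected component.

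Applying Theorem (20.1) of \cite{Coh} then gives that $\kp(X\smallsetminus\{x\})\hookrightarrow\kp(X)$ is a simple homotopy equivalence, which is the assertion. I do not expect any genuine obstacle here; the only point meriting an explicit sentence is the identification of the complement with the contractible open star of $x$, since that is the sole place where the combinatorial structure of an order complex enters and it is exactly the geometric input required by Theorem (20.1). (This observation has in fact already been recorded in the paragraph preceding the statement, so in the write-up the proof reduces to two lines: cite Lemma \ref{lema2} for the homotopy equivalence and Theorem (20.1) of \cite{Coh} for the upgrade to a simple homotopy equivalence.)
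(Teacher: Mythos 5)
Your proposal is correct and follows exactly the paper's own route: Lemma \ref{lema2} gives the homotopy equivalence, the complement $\kp(X)\smallsetminus\kp(X\smallsetminus\{x\})$ is identified as the open star of $x$ (contractible, hence simply connected), and Theorem (20.1) of \cite{Coh} upgrades the inclusion to a simple homotopy equivalence. Nothing is missing.
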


\begin{proof}[\textbf{Proof of Theorem \ref{ppalprima}}]
The proof is essentially the same as the one of Theorem \ref{ppal}. We use Lemma \ref{lema2prima} instead of Lemma \ref{lema2} and the $2$ out-of $3$ property. Specifically, by Lemma \ref{lema2prima}, the inclusions $\kp (Y_{r-1})\hookrightarrow \kp (Y_r)$ and $\kp (X_{r})\hookrightarrow \kp (X_{r-1})$ are simple homotopy equivalences. Since this class is closed under compositions, $\kp (i)$ and $\kp (j)$ are also simple equivalences and being $\kp (i)\simeq \kp (j) \kp (f)$, so is $\kp (f)$.
\end{proof}


Given a finite simplicial complex $K$, its \textit{associated poset} $\x (K)$ (also known as face poset) is the poset of simplices of $K$ ordered by containment. A simplicial map $\varphi : K\to L$ also has associated an order preserving map $\x (\varphi): \x (K) \to \x (L)$ defined by $\x (\varphi) (\sigma)=\varphi (\sigma)$. Note that $\kp (\x (K))$ coincides with the barycentric subdivision $K'$. It is a standard fact that a simplicial complex and its barycentric subdivision are simple homotopy equivalent. Moreover, a simplicial map $\varphi :K\to L$ is a simple homotopy equivalence if and only if the induced map $\varphi '=\kp (\x (\varphi)): K'\to L'$ is a simple homotopy equivalence.

We deduce the following result which is a stronger version of the simplicial statement of Quillen's Theorem A \cite{Qui2}. It also sharpens Theorem 4.3.14 of \cite{Bar3} which requires a more restrictive hypothesis on the map $\varphi$.

\begin{teo} \label{simplicialprima}
Let $\varphi : K \to L$ be a simplicial map between two finite complexes. Suppose that the preimage of each closed simplex of $L$ is contractible. Then $\varphi$ is a simple homotopy equivalence.
\end{teo}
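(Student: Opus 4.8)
The plan is to reduce the simplicial statement to the poset statement already proven, namely Theorem \ref{ppalprima}, by passing to face posets and barycentric subdivisions. Given $\varphi: K \to L$, I would consider the induced order preserving map $\x(\varphi): \x(K) \to \x(L)$ between the face posets. The goal is to apply Theorem \ref{ppalprima} to $\x(\varphi)$ and then transfer the conclusion back to $\varphi$ using the standard fact (recalled just before the theorem) that $\varphi$ is a simple homotopy equivalence if and only if $\varphi' = \kp(\x(\varphi)): K' \to L'$ is.

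The key step is to verify the fiber hypothesis of Theorem \ref{ppalprima} for $\x(\varphi)$: for every simplex $\sigma \in \x(L)$, the complex $\kp\bigl((\x\varphi)^{-1}(U_\sigma)\bigr)$ must be contractible. Now $U_\sigma \subseteq \x(L)$ is the poset of faces of the closed simplex $\sigma$, which is exactly $\x(\bar{\sigma})$ where $\bar{\sigma}$ denotes the closed simplex on $\sigma$. Its preimage $(\x\varphi)^{-1}(U_\sigma)$ consists of all simplices $\rho \in K$ with $\varphi(\rho) \subseteq \sigma$, i.e. the faces of $\varphi^{-1}(\bar\sigma)$, so $(\x\varphi)^{-1}(U_\sigma) = \x\bigl(\varphi^{-1}(\bar\sigma)\bigr)$. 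Therefore $\kp\bigl((\x\varphi)^{-1}(U_\sigma)\bigr) = \kp\bigl(\x(\varphi^{-1}(\bar\sigma))\bigr) = (\varphi^{-1}(\bar\sigma))'$, the barycentric subdivision of the preimage of the closed simplex $\sigma$. By hypothesis $\varphi^{-1}(\bar\sigma)$ is contractible, hence so is its barycentric subdivision. This verifies the hypothesis.

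Applying Theorem \ref{ppalprima} to $\x(\varphi)$ yields that $\kp(\x(\varphi)) = \varphi': K' \to L'$ is a simple homotopy equivalence. By the standard fact that $\varphi$ is a simple homotopy equivalence if and only if $\varphi'$ is, we conclude that $\varphi$ itself is a simple homotopy equivalence.

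I do not expect a genuine obstacle here; the only point requiring a little care is the bookkeeping identifying $(\x\varphi)^{-1}(U_\sigma)$ with the face poset of $\varphi^{-1}(\bar\sigma)$ and noting that $\varphi^{-1}$ of a closed simplex is indeed a subcomplex of $K$ (closed under taking faces), so that its order complex is its barycentric subdivision. Once that identification is in place the proof is a two-line application of the already established results.
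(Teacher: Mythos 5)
Your argument is exactly the paper's own proof: identify $(\x\varphi)^{-1}(U_\sigma)$ with $\x\bigl(\varphi^{-1}(\bar\sigma)\bigr)$, note its order complex is the barycentric subdivision of the contractible preimage so Theorem \ref{ppalprima} applies to $\x(\varphi)$, and then transfer the conclusion from $\varphi'$ back to $\varphi$ via the standard fact stated before the theorem. Correct and essentially identical in approach.
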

\begin{proof}
We show that the associated map $\x (\varphi)$ satisfies the hypotheses of Theorem \ref{ppalprima}. Given $\sigma \in \x (L)$, $$\kp (\x (\varphi)^{-1}(U_\sigma))=\kp (\x(\varphi ^{-1}(\sigma)))$$ is the barycentric subdivision of $\varphi ^{-1}(\sigma)$ which is contractible by hypothesis. Thus, $\varphi '$ is a simple homotopy equivalence and then so is $\varphi$.
\end{proof}

The original result of Quillen concludes under the same hypotheses that $\varphi$ is a homotopy equivalence. 

Another consequence of Theorem \ref{ppalprima} is the following improvement of the Nerve lemma proved by Borsuk (see \cite[Theorem 10.6]{Bjo}). Recall that the \textit{nerve} of a family $\mathcal{U}=\{U_i\}_{i\in I}$ of subsets of a set is the simplicial complex $\mathcal{N}(\mathcal{U})$ whose simplices are the finite subsets $J$ of $I$ such that $\bigcap\limits_{i\in J}U_i\neq \emptyset$. Given a poset $X$, we denote by $X^{op}$ the poset with the reversed order. 

\begin{teo} \label{nerveprima}
Let $K$ be a finite simplicial complex and let $\mathcal{U}=\{L_i\}_{i\in I}$ be a finite family of subcomplexes of $K$ such that $\bigcup\limits_{i\in I} L_i=K$ and such that every intersection of elements of $\mathcal{U}$ is empty or contractible. Then $K$ has the same simple homotopy type as $\mathcal{N}(\mathcal{U})$.
\end{teo}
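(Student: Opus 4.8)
The plan is to reduce the Nerve lemma to Theorem \ref{ppalprima} by passing to face posets and choosing a natural poset map whose fibers are the intersections in the nerve. First I would recall that, since a complex and its barycentric subdivision have the same simple homotopy type, and since a simplicial map is a simple homotopy equivalence iff its barycentric subdivision is, it suffices to exhibit a simple homotopy equivalence between suitable subdivisions; equivalently, to find an order preserving map between finite posets whose order complexes are $K'$ (up to simple equivalence) and $\mathcal{N}(\mathcal{U})'$ and which satisfies the fiber hypothesis of Theorem \ref{ppalprima}.

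The natural candidate is the following. For each simplex $\sigma \in \x(K)$, let $I(\sigma) = \{i \in I \mid \sigma \in L_i\}$; since $\bigcup_i L_i = K$ this set is non-empty, and since each $L_i$ is a subcomplex, $I(\sigma)$ is a simplex of $\mathcal{N}(\mathcal{U})$, because $\bigcap_{i \in I(\sigma)} L_i \ni \sigma \neq \emptyset$. Moreover $\sigma \le \tau$ in $\x(K)$ implies $I(\tau) \subseteq I(\sigma)$, so $\sigma \mapsto I(\sigma)$ is an order preserving map $g\colon \x(K) \to \x(\mathcal{N}(\mathcal{U}))^{op}$. Then I would apply Theorem \ref{ppalprima} to $g$: for a simplex $J \in \mathcal{N}(\mathcal{U})$, viewed as an element of $\x(\mathcal{N}(\mathcal{U}))^{op}$, one has $U_J = F_J^{\x(\mathcal{N})} = \{J' \mid J' \supseteq J\}$, and $g^{-1}(U_J) = \{\sigma \in \x(K) \mid I(\sigma) \supseteq J\} = \{\sigma \mid \sigma \in L_i \text{ for all } i \in J\} = \x(\bigcap_{i \in J} L_i)$, whose order complex is the barycentric subdivision of the non-empty complex $\bigcap_{i \in J} L_i$, hence contractible by hypothesis. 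Therefore $\kp(g)$ is a simple homotopy equivalence.

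It then remains to identify the source and target of $\kp(g)$ with $K$ and $\mathcal{N}(\mathcal{U})$ up to simple homotopy equivalence. On the source side this is immediate: $\kp(\x(K)) = K'$ is simple homotopy equivalent to $K$. On the target side, $\kp(\x(\mathcal{N}(\mathcal{U}))^{op}) = \kp(\x(\mathcal{N}(\mathcal{U})))$ as simplicial complexes (reversing the order of a poset does not change its order complex), and this equals $\mathcal{N}(\mathcal{U})'$, which is simple homotopy equivalent to $\mathcal{N}(\mathcal{U})$. Chaining these simple equivalences with $\kp(g)$ and using that simple homotopy equivalences form a class closed under composition and satisfy $2$-out-of-$3$, we conclude that $K$ and $\mathcal{N}(\mathcal{U})$ have the same simple homotopy type.

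The main obstacle, such as it is, is bookkeeping with the opposite poset: one must be careful that $U_J$ in $\x(\mathcal{N}(\mathcal{U}))^{op}$ corresponds to the \emph{up-set} $F_J$ in $\x(\mathcal{N}(\mathcal{U}))$, so that $g^{-1}(U_J)$ is a set of simplices lying in \emph{all} the $L_i$ with $i \in J$, which is exactly the face poset of the intersection $\bigcap_{i\in J} L_i$. Once the direction of the order is pinned down correctly, everything else is a routine application of the already established machinery (Theorem \ref{ppalprima}, invariance of simple homotopy type under barycentric subdivision, and the $2$-out-of-$3$ property); there is no hard analytic or combinatorial step.
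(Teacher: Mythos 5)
Your proposal is correct and is exactly the paper's argument: the same map $\x(K)\to \x(\mathcal{N}(\mathcal{U}))^{op}$, $\sigma\mapsto\{i\mid\sigma\in L_i\}$, verified to satisfy the hypotheses of Theorem \ref{ppalprima} because its fibers are face posets of the non-empty intersections, followed by the standard simple-equivalence identifications $K\simeq K'$ and $\mathcal{N}(\mathcal{U})\simeq\mathcal{N}(\mathcal{U})'$. You have merely spelled out the order-reversal bookkeeping that the paper leaves implicit.
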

\begin{proof}
The map $\x (K) \to \x (\mathcal{N}(\mathcal{U})) ^{op}$ that maps a simplex $\sigma \in K$ into $\{i \in I \ | \ \sigma \in L_i\}$ satisfies the hypotheses of Theorem \ref{ppalprima}. Therefore there is a simple homotopy equivalence from $K'$ to $\mathcal{N}(\mathcal{U})'$.
\end{proof}

We deduce then a stronger version of Dowker's Theorem \cite{Dow} (see also \cite[Theorem 10.9]{Bjo}).

\begin{teo}
Let $X$ and $Y$ be two finite sets and let $R\subseteq X\times Y$ be a relation. Consider the simplicial complex $K$ whose simplices are the subsets of $X$ of elements which are related to a same element of $Y$ and symmetrically, the simplices of the complex $L$ are subsets of $Y$ of elements related to a same element of $X$. Then $K$ and $L$ have the same simple homotopy type.  
\end{teo}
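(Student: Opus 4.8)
The plan is to deduce Dowker's theorem from the Nerve lemma in its strengthened form, Theorem \ref{nerveprima}, applied twice --- once to $K$ and once to $L$ --- using the two natural covers coming from the relation $R$. First I would set up notation: for each $y \in Y$ let $\sigma_y \subseteq X$ be the set of elements related to $y$, so that $K$ is the complex generated by the simplices $\{\sigma_y\}_{y \in Y}$ (together with their faces); symmetrically, for each $x \in X$ let $\tau_x \subseteq Y$ be the set of elements related to $x$, and $L$ is generated by $\{\tau_x\}_{x \in X}$. One must be a little careful here: the $\sigma_y$ need not be distinct, and some may be empty (if $y$ is related to nothing), so strictly the covering family indexed by $Y$ consists of the full subcomplexes $L_y := \overline{\sigma_y} \subseteq K$; I would either assume without loss of generality that no row or column of $R$ is empty (empty ones contribute nothing and can be discarded), or phrase the family as indexed by $Y$ with repetitions allowed, which is harmless since the nerve only depends on nonemptiness of intersections.

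Next I would observe that each $L_y$ is a closed simplex, hence contractible, and that any intersection $\bigcap_{y \in J} L_y = \overline{\bigcap_{y \in J} \sigma_y}$ is either empty or again a closed simplex, hence contractible. Thus the family $\mathcal{U} = \{L_y\}_{y \in Y}$ satisfies the hypotheses of Theorem \ref{nerveprima}, so $K$ has the same simple homotopy type as $\mathcal{N}(\mathcal{U})$. The key identification is then that $\mathcal{N}(\mathcal{U})$ is exactly $L$: a finite subset $J \subseteq Y$ is a simplex of $\mathcal{N}(\mathcal{U})$ iff $\bigcap_{y \in J}\sigma_y \neq \emptyset$, i.e. iff there exists $x \in X$ with $x\,R\,y$ for all $y \in J$, i.e. iff $J \subseteq \tau_x$ for some $x$, which is precisely the condition for $J$ to be a simplex of $L$. (If one kept repetitions in the indexing family, one checks that the resulting nerve is canonically isomorphic to $L$ via the map collapsing equal indices, which is an isomorphism of complexes and in particular a simple homotopy equivalence.) Hence $K$ and $L$ have the same simple homotopy type.

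The main obstacle is essentially bookkeeping rather than mathematics: making sure the correspondence between ``simplices of the nerve'' and ``simplices of $L$'' is stated cleanly when the generating sets $\sigma_y$ coincide or are empty, so that Theorem \ref{nerveprima} applies to an honest family of subcomplexes. Once that is handled, the argument is immediate --- there is no second application of the Nerve lemma needed, since the nerve of the $X$-indexed cover is $K$ by the symmetric computation, but we only need one direction. Alternatively, and perhaps more in the spirit of the paper, one can give a direct proof by applying Theorem \ref{ppalprima} to the order preserving map $\x(K) \to \x(L)^{op}$ sending a simplex $\sigma \in K$ to the set $\{y \in Y : \sigma \subseteq \sigma_y\} = \{y \in Y : x\,R\,y \text{ for all } x \in \sigma\}$, checking that the preimage of $U_\tau$ has contractible order complex because it is the face poset of a closed simplex; I would mention this route but carry out the cleaner deduction from Theorem \ref{nerveprima}.
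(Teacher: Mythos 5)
Your proposal is correct and is essentially the paper's own argument: cover one of the two complexes by the closed simplices determined by the relation, note that all intersections are empty or are simplices (hence contractible), identify the nerve of this cover with the other complex, and invoke Theorem \ref{nerveprima}; the only difference is that you cover $K$ by the $Y$-indexed simplices while the paper covers $L$ by the $X$-indexed ones, which is the same proof up to swapping the roles of $X$ and $Y$. Your care about empty or repeated members of the family is fine (and in fact the nerve indexed by $Y$ is literally $L$, so no collapsing of indices is needed), but it does not change the route.
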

\begin{proof}
Consider for every $x\in X$ the full subcomplex $\sigma _x \subseteq L$ spanned by the elements related to $x$ (it is a simplex if non-empty). These subcomplexes cover $L$ and clearly any intersection of them is empty or contractible. The nerve of this cover is $K$ and the result follows then from Theorem \ref{nerveprima}. 
\end{proof}

Rota's Crosscut theorem can also be improved by a direct application of Theorem \ref{nerveprima} (see \cite[Theorem 10.8]{Bjo}). 

In general, the usual applications of Theorem \ref{ppal} have now a more precise statement, in particular Quillen's original results on the poset of nontrivial $p$-subgroups of a group \cite{Qui}. Hopefully Theorem \ref{ppalprima} could help in the search for a complete proof of Quillen's Conjecture \cite[Conjecture 2.9]{Qui}. 

\section{Two more applications}

Other versions of the McCord-Quillen Theorem can be obtained by modifying the hypotheses on the subcomplexes $\kp (f^{-1}(U_y))$. The following result was proved by Bj\"orner \cite[Theorem 2]{Bjo2} using the homotopy version of the Acyclic carrier theorem. We exhibit here an alternative proof using our approach to Theorem \ref{ppal}. Recall that a continuous map $f:X\to Y$ between two topological spaces is said to be an $n$-equivalence if for every $x\in X$, the induced map $\pi _i (X,x)\to \pi _i (Y,f(x))$ is an isomorphism for $i<n$ and an epimorphism for $i=n$. 

\begin{teo}[Bj\"orner] \label{ppalbjorner}
Let $f:X\to Y$ be an order preserving map between two finite posets and let $n$ be a nonnegative integer. Suppose that for every $y\in Y$, the complex $\kp(f^{-1}(U_y))$ is $n$-connected. Then $\kp(f)$ is an $(n+1)$-equivalence.  
\end{teo}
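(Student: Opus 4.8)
The plan is to run the proof of Theorem \ref{ppal} essentially verbatim, replacing Lemma \ref{lema2} by a connectivity-sensitive refinement. So the first step is to establish the following version of Lemma \ref{lema2}: \emph{if $X$ is a finite poset and $x\in X$ is such that $\kp(\hat U_x)$ or $\kp(\hat F_x)$ is $n$-connected, then the inclusion $\kp(X\smallsetminus\{x\})\hookrightarrow\kp(X)$ is an $(n+1)$-equivalence.} For the proof I would argue exactly as in Lemma \ref{lema2}: $lk_{\kp(X)}(x)=\kp(\hat U_x)*\kp(\hat F_x)$, the closed star $st_{\kp(X)}(x)$ is a cone with apex $x$ (hence contractible), and $\kp(X)=\kp(X\smallsetminus\{x\})\cup st_{\kp(X)}(x)$ with intersection $lk_{\kp(X)}(x)$. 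Since the join of an $n$-connected complex with an arbitrary complex is again $n$-connected (and $(n+1)$-connected if the other factor is nonempty), $lk_{\kp(X)}(x)$ is $n$-connected, so the pair $(st_{\kp(X)}(x),lk_{\kp(X)}(x))$ is $(n+1)$-connected. As $\kp(X)$ is obtained from $\kp(X\smallsetminus\{x\})$ by coning off $lk_{\kp(X)}(x)$, the homotopy excision theorem gives that the pair $(\kp(X),\kp(X\smallsetminus\{x\}))$ is $(n+1)$-connected, i.e.\ the inclusion is an $(n+1)$-equivalence.

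With this lemma in hand I would repeat the proof of Theorem \ref{ppal} word for word. Form the non-Hausdorff mapping cylinder $B(f)$ with the canonical inclusions $i:X\hookrightarrow B(f)$ and $j:Y\hookrightarrow B(f)$. For a linear extension of $X$ and the posets $Y_r$, each $\hat F_{x_r}^{Y_r}=F_{f(x_r)}^Y$ still has a cone as order complex, so Lemma \ref{lema2} applies unchanged and $\kp(j):\kp(Y)\hookrightarrow\kp(B(f))$ is a homotopy equivalence. For a linear extension of $Y$ and the posets $X_r$, each $\hat U_{y_r}^{X_{r-1}}=f^{-1}(U_{y_r}^Y)$ has $n$-connected order complex by hypothesis, so by the refined lemma each $\kp(X_r)\hookrightarrow\kp(X_{r-1})$ is an $(n+1)$-equivalence, and since $(n+1)$-equivalences are closed under composition, $\kp(i):\kp(X)\hookrightarrow\kp(B(f))$ is an $(n+1)$-equivalence. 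Finally $i(x)\le jf(x)$ for every $x\in X$, so by Proposition \ref{lema1} one has $\kp(i)\simeq\kp(j)\kp(f)$; composing with a homotopy inverse of the equivalence $\kp(j)$ shows that $\kp(f)$ is an $(n+1)$-equivalence.

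The step I expect to be the main obstacle is the appeal to homotopy excision inside the refined lemma, which requires a little care about basepoints and about the connectivity of the factors: one should work one connected component at a time, using that for $n\ge 0$ the link $lk_{\kp(X)}(x)$ is nonempty and connected, hence lies in a single connected component of $\kp(X\smallsetminus\{x\})$, so that attaching the contractible star neither creates nor merges components and $\kp(X\smallsetminus\{x\})\hookrightarrow\kp(X)$ is in particular a $\pi_0$-isomorphism. (If one prefers to avoid Blakers--Massey, one can instead replace $lk_{\kp(X)}(x)$ by a minimal CW model with cells only in dimensions $0$ and $\ge n+1$ and observe that, up to a whisker, $\kp(X)$ is then obtained from $\kp(X\smallsetminus\{x\})$ by attaching cells of dimension $\ge n+2$, which is an $(n+1)$-equivalence.) Apart from this, everything is a mechanical transcription of the arguments in the proofs of Theorems \ref{ppal} and \ref{ppalprima}.
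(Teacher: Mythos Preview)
Your proposal is correct and follows essentially the same approach as the paper: the paper states precisely your refined lemma (Lemma \ref{lema2bjorner}), proves it via the join connectivity result of Milnor and the homotopy excision theorem (handling components by assuming $\kp(X)$ connected), and then runs the mapping cylinder argument of Theorem \ref{ppal} verbatim, using that $\kp(j)$ is still a genuine homotopy equivalence while $\kp(i)$ is an $(n+1)$-equivalence by the refined lemma. Your discussion of the basepoint/component issue in the excision step matches the care taken in the paper.
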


We need a third version of Lemma \ref{lema2}.

\begin{lema} \label{lema2bjorner}
Let $n\ge 0$, let $X$ be a finite poset and let $x\in X$ be such that $\kp(\hat{U}_x)$ is $n$-connected. Then $\kp(X\smallsetminus \{x\}) \hookrightarrow \kp(X)$ is an $(n+1)$-equivalence.
\end{lema}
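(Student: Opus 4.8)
The plan is to mimic the structure of the proof of Lemma \ref{lema2} (and its refinement \ref{lema2prima}), replacing the appeals to contractibility by the weaker hypothesis of $n$-connectedness and replacing Proposition \ref{push} by a connectivity version of the Gluing/pushout argument. First I would recall that $\kp(X) = \kp(X\smallsetminus\{x\}) \cup st_{\kp(X)}(x)$, that the intersection of these two subcomplexes is exactly $lk_{\kp(X)}(x) = \kp(\hat U_x) * \kp(\hat F_x)$, and that $st_{\kp(X)}(x)$ is a cone, hence contractible. So the homotopy pushout computing $\kp(X)$ is, up to homotopy, the pushout of a point $\leftarrow lk_{\kp(X)}(x) \rightarrow \kp(X\smallsetminus\{x\})$.

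The key step is then a purely homotopy-theoretic lemma: if $A \hookrightarrow B$ is a cofibration and $A$ is $n$-connected (or more precisely, if the inclusion $A \hookrightarrow B$ is such that $A$ is $n$-connected), then the inclusion $B \hookrightarrow B \cup_A CA$ (coning off $A$) is an $(n+1)$-equivalence. I would prove this via the long exact sequence of the pair $(B\cup_A CA, B)$ together with excision: by excision and the fact that $CA$ is contractible, $H_i(B\cup_A CA, B) \cong H_i(CA, A) \cong \tilde H_{i-1}(A)$, which vanishes for $i \le n+1$ since $A$ is $n$-connected; combined with a van Kampen argument to handle $\pi_1$, this gives that $B \hookrightarrow B\cup_A CA$ is an $(n+1)$-equivalence. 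Here $A = lk_{\kp(X)}(x)$, which is $n$-connected because $\kp(\hat U_x)$ is $n$-connected and $lk_{\kp(X)}(x) = \kp(\hat U_x)*\kp(\hat F_x)$ is a join with $\kp(\hat U_x)$ (the join of an $n$-connected complex with any nonempty complex is $n$-connected, and the case $\hat F_x = \emptyset$ is immediate since then $x$ is maximal and $lk_{\kp(X)}(x) = \kp(\hat U_x)$ itself). Finally I would identify $B\cup_A CA \simeq \kp(X)$ and $B = \kp(X\smallsetminus\{x\})$ to conclude.

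The main obstacle I anticipate is the bookkeeping around the fundamental group and the edge case $n=0$: an $n$-equivalence for $n=0$ only asserts surjectivity on $\pi_0$, and for $n=1$ one must be careful that the $\pi_1$ statement (iso for $i<n$, epi for $i=n$) is extracted correctly from van Kampen when $A$ is merely $0$-connected. One clean way to avoid delicate relative-homotopy arguments is to pass to universal covers or to invoke the standard fact (e.g.\ from the theory of CW pairs) that attaching cells of dimension $\ge n+2$ to $B$ does not change $\pi_i$ for $i \le n$ and is surjective on $\pi_{n+1}$; since $B\cup_A CA$ is obtained from $B$ by attaching the cone, and the relative cells correspond to cells of $CA$ rel $A$, one should verify these have dimension $\ge n+2$ after a cellular approximation that uses the $n$-connectedness of $A$ to trade in the low cells. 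I would state this attaching-cells fact as the technical heart and reduce everything else to it.
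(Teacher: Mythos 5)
Your proposal is correct, and it uses the same decomposition as the paper ($\kp(X)=\kp(X\smallsetminus\{x\})\cup st_{\kp(X)}(x)$ with intersection $lk_{\kp(X)}(x)=\kp(\hat{U}_x)*\kp(\hat{F}_x)$, which is $n$-connected), but the key technical input is genuinely different. The paper quotes the excision (Blakers--Massey) theorem for homotopy groups: since $(st_{\kp(X)}(x),lk_{\kp(X)}(x))$ is $(n+1)$-connected and $(\kp(X\smallsetminus\{x\}),lk_{\kp(X)}(x))$ is $0$-connected (after reducing to $\kp(X)$ connected), the pair $(\kp(X),\kp(X\smallsetminus\{x\}))$ is $(n+1)$-connected, which is exactly the assertion. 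You instead exploit that $st_{\kp(X)}(x)$ is the cone on the link, so $\kp(X)\simeq B\cup_A CA$ with $A$ $n$-connected, and reduce to the statement that coning off an $n$-connected subcomplex is an $(n+1)$-equivalence; your second route (replace $A$ by a CW approximation with no cells in dimensions $1,\dots,n$, so that up to an elementary collapse $B\cup_A CA$ is obtained from $B$ by attaching cells of dimension $\ge n+2$, which does not affect $\pi_i$ for $i\le n$ and is surjective on $\pi_{n+1}$) is a complete and standard argument, at the cost of invoking CW approximation and homotopy invariance of adjunction spaces along cofibrations. Be aware that your first sketched route is not sufficient as stated: vanishing of $H_i(B\cup_A CA,B)$ for $i\le n+1$ together with a van Kampen computation of $\pi_1$ does not by itself yield a statement about higher homotopy groups; one must add a relative Hurewicz argument carried out in universal covers (you do gesture at this, and the cell-attachment route avoids it altogether). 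In summary: the paper's proof is shorter given the cited excision theorem, while yours is self-contained modulo standard CW theory; both correctly use Milnor's join connectivity estimate (your remark that the join with a nonempty complex is $n$-connected is fine --- it is in fact $(n+1)$-connected --- and the case $\hat{F}_x=\emptyset$ is handled as you say).
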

\begin{proof}
The link $lk_{\kp (X)}(x)=\kp (\hat{U}_x)*\kp(\hat{F}_x)$ is also $n$-connected by \cite[Lemma 2.3]{Mil} and therefore the pair $(st_{\kp (X)}(x), lk_{\kp (X)}(x))$ is $(n+1)$-connected. We can assume that $\kp (X)$ is connected and therefore, $(\kp (X\smallsetminus \{x\}), lk_{\kp (X)}(x))$ is $0$-connected. By the Excision theorem for homotopy groups \cite[Theorem 4.23]{Hat}, the map $\pi_i (st_{\kp (X)}(x), lk_{\kp (X)}(x))\to (\kp (X), \kp (X\smallsetminus \{x\}))$ induced by the inclusion is an isomorphism for $i<n+1$ and an epimorphism for $i=n+1$. Thus, $(\kp (X), \kp (X\smallsetminus \{x\}))$ is $(n+1)$-connected and the lemma follows.
\end{proof}

\begin{proof}[Proof of Theorem \ref{ppalbjorner}]
As in the proof of Theorem \ref{ppal}, $\kp (j)$ is a homotopy equivalence and since composition of $(n+1)$-equivalences is again an $(n+1)$-equivalence, by Lemma \ref{lema2bjorner} $\kp (i)$ is an $(n+1)$-equivalence. Since $\kp (i)\simeq \kp (j) \kp (f)$, $\kp (f)$ is an $(n+1)$-equivalence.
\end{proof}

Before proving the homological analogous of Theorem \ref{ppalbjorner} due to Quillen \cite{Qui}, we state a fourth version of Lemma \ref{lema2}.

\begin{lema}\label{lema2homo}
Let $n\ge 0$, let $X$ be a finite poset and let $x\in X$ be such that the reduced (integral) homology groups $\widetilde{H}_i(\kp(\hat{U}_x))$ are trivial for $i\le n$. Then the map $\widetilde{H}_i(\kp(X\smallsetminus \{x\})) \hookrightarrow \widetilde{H}_i(\kp(X))$ induced by the inclusion is an isomorphism for $i\le n$ and an epimorphism for $i=n+1$.
\end{lema}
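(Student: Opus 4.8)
The plan is to follow the proof of Lemma \ref{lema2bjorner} almost verbatim, but working at the level of homology groups and using ordinary homological excision in place of the excision theorem for homotopy groups; this actually shortens the argument, since homological excision carries no connectivity hypothesis and the step ``we can assume $\kp(X)$ connected'' becomes unnecessary. Write $st=st_{\kp(X)}(x)$ and $lk=lk_{\kp(X)}(x)$. As in the earlier lemmas, $\kp(X)=\kp(X\smallsetminus\{x\})\cup st$ with $\kp(X\smallsetminus\{x\})\cap st=lk$, and the simplices of $\kp(X)$ that do not lie in $\kp(X\smallsetminus\{x\})$ are exactly those of $st$ that do not lie in $lk$, namely the chains through $x$. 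Hence the relative simplicial chain complexes coincide and $H_i(\kp(X),\kp(X\smallsetminus\{x\}))\cong H_i(st,lk)$ for all $i$; and since $st$ is a cone, the long exact sequence of the pair $(st,lk)$ identifies this group with $\widetilde H_{i-1}(lk)$.

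The only computation is then that $\widetilde H_k(lk)=0$ for $k\le n$. I would obtain this from $lk=\kp(\hat U_x)*\kp(\hat F_x)$ together with the K\"unneth formula for the homology of a join (the homological analogue of the connectivity statement \cite[Lemma 2.3]{Mil} used in Lemma \ref{lema2bjorner}): the hypothesis $\widetilde H_i(\kp(\hat U_x))=0$ for $i\le n$ forces $\widetilde H_k(\kp(\hat U_x)*\kp(\hat F_x))=0$ for $k\le n+1$, and when $\hat F_x=\emptyset$ the join degenerates to $\kp(\hat U_x)$ and the hypothesis applies directly. Either way $\widetilde H_k(lk)=0$ for $k\le n$, so $H_i(\kp(X),\kp(X\smallsetminus\{x\}))=0$ for $i\le n+1$.

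I would then feed this vanishing into the long exact sequence of the pair $(\kp(X),\kp(X\smallsetminus\{x\}))$: the portion
\[
H_{i+1}(\kp(X),\kp(X\smallsetminus\{x\}))\to\widetilde H_i(\kp(X\smallsetminus\{x\}))\to\widetilde H_i(\kp(X))\to H_i(\kp(X),\kp(X\smallsetminus\{x\}))
\]
shows that the inclusion induces an isomorphism on $\widetilde H_i$ for $i\le n$ and an epimorphism on $\widetilde H_{n+1}$, which is the claim. (Equivalently one may run the reduced Mayer--Vietoris sequence of the cover $\kp(X)=\kp(X\smallsetminus\{x\})\cup st$ and use that $\widetilde H_*(st)=0$.)

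I do not expect any genuine obstacle here; the proof is routine homological bookkeeping, and the one nontrivial input, the homology of a join, is standard. The only points that need a little care are the degenerate cases and their conventions: the complexes $\kp(\hat U_x)$, $\kp(\hat F_x)$ may be void, so the hypothesis should be read as including degree $-1$ (equivalently, $\kp(\hat U_x)\neq\emptyset$) — otherwise adjoining an isolated point would already be a counterexample — and one must keep the indices straight in the join formula so that the degree shift built into reduced homology is correctly accounted for.
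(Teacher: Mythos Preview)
Your proof is correct and follows essentially the same route as the paper: the paper cites \cite[Lemma 2.1]{Mil} for the vanishing of $\widetilde H_i(lk_{\kp(X)}(x))=\widetilde H_i(\kp(\hat U_x)*\kp(\hat F_x))$ for $i\le n$ and then applies the Mayer--Vietoris sequence for $\kp(X)=\kp(X\smallsetminus\{x\})\cup st_{\kp(X)}(x)$, which is exactly the alternative you mention in parentheses. Your primary phrasing via excision and the long exact sequence of the pair is of course equivalent, and your remark about the degenerate case $\kp(\hat U_x)=\emptyset$ is a fair caveat about the statement that the paper leaves implicit.
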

\begin{proof}
The groups $\widetilde{H}_i(lk_{\kp (X)}(x))=\widetilde{H}_i(\kp (\hat{U}_x)*\kp(\hat{F}_x))$ are trivial for $i\le n$ by \cite[Lemma 2.1]{Mil}. The result then follows from the Mayer-Vietoris sequence for the decomposition $\kp (X)=\kp (X\smallsetminus {x})\cup st_{\kp (X)}(x)$. 
\end{proof}

Using again the ideas of the proof of Theorem \ref{ppal} we deduce the following result.

\begin{teo}[Quillen]
Let $f:X\to Y$ be an order preserving map between two finite posets and let $n$ be a nonnegative integer. Suppose that for every $y\in Y$, the reduced homology groups $\widetilde{H}_i(\kp(f^{-1}(U_y)))$ are trivial for $i\le n$. Then $\kp(f)_*:\widetilde{H}_i(\kp(X))\to \widetilde{H}_i(\kp(Y))$ is an isomorphism for $i\le n$ and an epimorphism for $i=n+1$. 
\end{teo}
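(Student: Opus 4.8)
The plan is to mimic, almost verbatim, the proof of Theorem \ref{ppal} (and of Theorem \ref{ppalbjorner}), replacing the homotopy-theoretic step by its homological counterpart, Lemma \ref{lema2homo}. Assume that $X$ and $Y$ are disjoint and form the non-Hausdorff mapping cylinder $B(f)$, with its canonical inclusions $i\colon X\hookrightarrow B(f)$ and $j\colon Y\hookrightarrow B(f)$.

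First I would reuse the $\kp(j)$ half of the argument unchanged: for a linear extension $x_1,\dots,x_n$ of $X$ and $Y_r=Y\cup\{x_1,\dots,x_r\}$ one has $\hat{F}^{Y_r}_{x_r}=F^Y_{f(x_r)}$, whose order complex is a cone, hence contractible, so Lemma \ref{lema2} applies at each step and $\kp(j)\colon\kp(Y)\hookrightarrow\kp(B(f))$ is a homotopy equivalence; in particular $\kp(j)_*$ is an isomorphism on $\widetilde{H}_i$ in every degree. No homological version of the $\hat{F}$ case of Lemma \ref{lema2} is needed here, since on the $Y$-side the stronger conclusion (a homotopy equivalence) is already at hand.

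Next, take a linear extension $y_1,\dots,y_m$ of $Y$ and put $X_r=X\cup\{y_{r+1},\dots,y_m\}$, so that $X_r=X_{r-1}\smallsetminus\{y_r\}$ and, exactly as in the proof of Theorem \ref{ppal}, $\hat{U}^{X_{r-1}}_{y_r}=f^{-1}(U^Y_{y_r})$. By hypothesis $\widetilde{H}_i(\kp(f^{-1}(U^Y_{y_r})))=0$ for $i\le n$, so Lemma \ref{lema2homo} shows that the inclusion-induced map $\widetilde{H}_i(\kp(X_r))\to\widetilde{H}_i(\kp(X_{r-1}))$ is an isomorphism for $i\le n$ and an epimorphism for $i=n+1$. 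This class of homomorphisms is closed under composition (a composite of isomorphisms is an isomorphism; a composite of epimorphisms is an epimorphism), so the composite $\kp(i)_*\colon\widetilde{H}_i(\kp(X))=\widetilde{H}_i(\kp(X_m))\to\widetilde{H}_i(\kp(X_0))=\widetilde{H}_i(\kp(B(f)))$ is an isomorphism for $i\le n$ and an epimorphism for $i=n+1$.

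Finally, since $i(x)\le jf(x)$ for every $x\in X$, Proposition \ref{lema1} gives $\kp(i)\simeq\kp(j)\kp(f)$, hence $\kp(i)_*=\kp(j)_*\circ\kp(f)_*$ on reduced homology; as $\kp(j)_*$ is an isomorphism, $\kp(f)_*=\kp(j)_*^{-1}\circ\kp(i)_*$ inherits the property of being an isomorphism for $i\le n$ and an epimorphism for $i=n+1$, which is the assertion. I do not expect a genuine obstacle: the only points to watch are the identity $\hat{U}^{X_{r-1}}_{y_r}=f^{-1}(U^Y_{y_r})$ (already verified in the proof of Theorem \ref{ppal}) and the observation that Lemma \ref{lema2homo} is phrased with a hypothesis on $\hat{U}_x$ alone — which is precisely what is available at the $X$-side steps, the $Y$-side steps being absorbed by the stronger Lemma \ref{lema2}.
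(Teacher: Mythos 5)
Your proposal is correct and is exactly the argument the paper intends: the paper only remarks that the result follows ``using again the ideas of the proof of Theorem \ref{ppal}'', and your proof carries this out precisely as in Theorem \ref{ppalbjorner} --- the $Y$-side steps via Lemma \ref{lema2}, the $X$-side steps via Lemma \ref{lema2homo}, closure under composition, and $\kp(i)\simeq\kp(j)\kp(f)$ from Proposition \ref{lema1}. No gaps; nothing further to add.
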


\begin{coro}
Let $f:X\to Y$ be an order preserving map between two finite posets. If $\kp (f^{-1}(U_y))$ is acyclic for every $y\in Y$, then $\kp (f)$ induces isomorphisms in all the homology groups.
\end{coro}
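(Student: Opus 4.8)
The plan is to deduce the corollary directly from the preceding Quillen theorem by applying it for every value of $n$ at once. Saying that $\kp(f^{-1}(U_y))$ is acyclic means precisely that all of its reduced integral homology groups $\widetilde{H}_i(\kp(f^{-1}(U_y)))$ are trivial. In particular, for each nonnegative integer $n$ the hypothesis of the theorem---that $\widetilde{H}_i(\kp(f^{-1}(U_y)))=0$ for $i\le n$ and every $y\in Y$---is automatically satisfied.

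First I would fix an arbitrary $n\ge 0$ and apply the theorem to obtain that $\kp(f)_*:\widetilde{H}_i(\kp(X))\to\widetilde{H}_i(\kp(Y))$ is an isomorphism for all $i\le n$. Since $n$ is arbitrary, letting it grow without bound shows that $\kp(f)_*$ is an isomorphism on reduced homology in every degree. To pass to unreduced homology I would use that $\widetilde{H}_i=H_i$ for $i\ge 1$, while in degree zero the augmentation gives a natural splitting $H_0=\widetilde{H}_0\oplus\mathbb{Z}$ compatible with $\kp(f)_*$ (assuming $Y$ nonempty, the empty case being trivial); hence an isomorphism on reduced $H_0$ forces one on $H_0$, and therefore $\kp(f)$ induces isomorphisms in all homology groups.

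I expect no genuine obstacle here: all the real work has already been carried out in the Quillen theorem, and the corollary follows from the simple observation that acyclicity supplies the vanishing hypothesis simultaneously for every $n$. The only point requiring a word of care is the reduced-versus-unreduced comparison in degree zero, which is entirely routine.
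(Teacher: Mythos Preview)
Your proposal is correct and is exactly the intended deduction: the paper states the corollary immediately after Quillen's homological theorem without further proof, so the implicit argument is precisely the one you give---apply the theorem for every $n$ and observe that acyclicity supplies the vanishing hypothesis in all degrees at once. Your extra remark on reduced versus unreduced $H_0$ is a harmless bit of care beyond what the paper spells out.
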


\textbf{Acknowledgments}
\medskip

The basic ideas of this article were originated in my PhD work with my advisor Gabriel Minian. I am grateful to him for his encouragement, advices and constant support, then and now. I also want to thank Anders Bj\"orner for some useful suggestions and comments.


\begin{thebibliography}{99}

\bibitem{Bar3} J.A. Barmak. \textit{Algebraic topology of finite topological spaces and applications}.
		PhD Thesis, Facultad de Ciencias Exactas y Naturales, Universidad de Buenos Aires (2009).

\bibitem{BM4} J.A. Barmak and E.G. Minian. \textit{One-point reductions of finite spaces, h-regular CW-complexes and collapsibility}.
		Algebr. Geom. Topol. 8 (2008), 1763-1780.

\bibitem{BM} J.A. Barmak and E.G. Minian. \textit{Simple homotopy types and finite spaces}.
		Adv. Math. 218 (2008), Issue 1, 87-104.

\bibitem{BM2} J.A. Barmak and E.G. Minian. \textit{Strong homotopy types, nerves and collapses}.
		arXiv:0907.2954v1

\bibitem{Bjo2} A. Bj\"orner. \textit{Nerves, fibers and homotopy groups}.
		J. Combin. Theory, Ser. A, 102 (2003), 88-93.

\bibitem{Bjo} A. Bj\"orner. \textit{Topological methods}.
		Handbook of combinatorics (ed. R. Graham, M. Gr\"otschel and L. Lov\'asz; North-Holland, Amsterdam).

\bibitem{Bro} R. Brown. \textit{Elements of modern topology}.
		McGraw-Hill, London, 1968.

\bibitem{Coh} M.M. Cohen. \textit{A Course in Simple Homotopy Theory}.
    Springer-Verlag New York, Heidelberg, Berlin (1970).

\bibitem{Dow} C.H. Dowker. \textit{Homology groups of relations}.
		Ann. of Math. 56 (1952), 84-95.

\bibitem{Hat} A. Hatcher. \textit{Algebraic Topology}.
		Cambridge University Press (2002).

\bibitem{Mcc} M.C. McCord. \textit{Singular homology groups and homotopy groups of finite topological spaces}.
    Duke Math. J. 33 (1966), 465-474.

\bibitem{Mil} J.W. Milnor \textit{Construction of universal bundles II}.
	 Ann. of Math. 63 (1956), 430-436. 

\bibitem{Qui2} D. Quillen. \textit{Higher algebraic $K$-theory, I: Higher $K$-theories}.
		Lect. Notes in Math. 341 (1972), 85-147.

\bibitem{Qui} D. Quillen. \textit{Homotopy properties of the poset of nontrivial $p$-subgroups of a group}.
		Adv. Math. 28 (1978), 101-128. 

\bibitem{Wal} J.W. Walker. \textit{Homotopy type and Euler characteristic of partially ordered sets}.
		European J. of Combinatorics 2 (1981), 373-384.

\end{thebibliography}
\end{document}